\providecommand{\U}[1]{\protect \rule{.1in}{.1in}}
\newtheorem{theorem}{Theorem}[section]
\newtheorem{lemma}[theorem]{Lemma}
\theoremstyle{definition}
\newtheorem{definition}[theorem]{Definition}
\theoremstyle{remark}
\numberwithin{equation}{section}
\begin{document}

\title{A Note on Inextensible Flows of Curves in $E_{1}^{n}$}
\author{\"{O}NDER G\"{O}KMEN YILDIZ}
\address{Department of Mathematics, Faculty of Sciences and Arts, Bilecik \c{S}eyh Edebali
University, Bilecik, TURKEY}
\email{ogokmen.yildiz@bilecik.edu.tr}
\author{MURAT TOSUN}
\address{Department of Mathematics, Faculty of Sciences and Arts, Sakarya University, Sakarya, TURKEY}
\email{tosun@sakarya.edu.tr}
\subjclass[2010]{53C44, 51B20, 53A35.}
\keywords{Curvature flows, inextensible, Minkowskian n-space.}
\maketitle

\begin{abstract}
In this paper, we study inextensible flows of non-null curves in $E_{1}^{n}$.
We give necessary and sufficient conditions for inextensible flow of non-null
curve in $E_{1}^{n}$.

\end{abstract}

\section{Introduction}

Flow of curves has a very important place in the field of industry such as
modeling ship hulls, buildings, airplane wings, garments, ducts, automobile
parts. Moreover Chirikjian and Burdick describe the kinematics of
hyperredundant (or "serpentine") robot as the flow of plane curve
\cite{chirikjian}. The flow of a curve is said to be inextensible if, its
arclength is preserved. Firstly, Kwon and Park studied inextensible flows of
curves and developable surfaces, which its arclength is preserved, in
Euclidean 3-space \cite{kwon}.

Inextensible curve flows conduce to motions in which no strain energy is
induced in physical science. For example, the swinging motion of a cord of
fixed length can be represented by this type of curve flows. Also inextensible
flows of curves have great importance in computer vision and computer
animation moreover structural mechanics (see \cite{desbrun},\cite{kass},
\cite{lu}).

There are many studies in the literature on plane curve flows, especially on
evolving curves in the direction of their curvature vector field (referred to
by various names such as \textquotedblleft curve shortening\textquotedblright,
flow by curvature\textquotedblright \ and \textquotedblleft heat
flow\textquotedblright). Among them, perhaps, most important case (but already
a very subtle one) is the curve-shortening flow in the plane studied by Gage
and Hamilton \cite{gage} and Grayson \cite{grayson}. Another paper about curve
flows was studied by Chirikjian \cite{Chirikjian2}.

Inextensible flows of curves are studied in many different spaces. For
example, G\"{u}rb\"{u}z have examined inextensible flows of spacelike,
timelike and null curves in \cite{gurbuz}. After this work
\"{O}\u{g}renmi\c{s} et al. have studied inextensible curves in Galilean space
\cite{ogrenmis} and Y\i ld\i z et al. have studied inextensible flows of
curves according to darboux frame in Euclidean 3-space \cite{yildiz} and they
have investigated inextensible flows of curves in Euclidean n-space
\cite{yildiz2}, etc.

In the present paper following \cite{kwon}, \cite{gurbuz}, \cite{ogrenmis},
\cite{yildiz}, \cite{yildiz2}, we study inextensible flows of non-null curves
in $E_{1}^{n}.$ Further, we give necessary and sufficient conditions for
inextensible flows of non-null curves in $E_{1}^{n}.$

\section{Preliminaries and Notations}

\bigskip Let $E_{1}^{n\text{ }}$ be the $n-$dimensional pseudo-Euclidean space
with index 1 endowed with the indefinite inner product given by%
\[
\left \langle X,Y\right \rangle =-x_{1}y_{1}+%
{\displaystyle \sum \limits_{i=2}^{n}}
x_{i}y_{i},
\]
where $X=\left(  x_{1},x_{2},...,x_{n}\right)  ,Y=\left(  y_{1},y_{2}%
,...,y_{n}\right)  $ is the usual coordinate system. An arbitrary vector
$X=\left(  x_{1},x_{2},...,x_{n}\right)  $\ in $E_{1}^{n\text{ }}$\ can have
one of three Lorentzian causal characters; it can be spacelike if
$\left \langle X,X\right \rangle >0$ or\ $X=0$\ , timelike if $\left \langle
X,X\right \rangle <0$ and null (lightlike) if $\left \langle X,X\right \rangle
=0$\ and $X\neq0$. The category into which a given tangent vector falls is
called its causal character. These definitions can be generalized for curves
as follows. A curve $\alpha$\ in $E_{1}^{n\text{ }}$ is said to be spacelike
if all of its velocity vectors $\alpha^{\prime}$\ are spacelike, similarly for
timelike and null \cite{barros}.

Let $\alpha:I\subset R\mathbb{\longrightarrow}E_{1}^{n\text{ }}$ be non-null
curve in $E_{1}^{n\text{ }}$. A non-null curve $\alpha(s)$ is said to be a
unit speed curve if $\left \langle \alpha^{\prime}(s),\alpha^{\prime
}(s)\right \rangle =\varepsilon_{0}$, ($\varepsilon_{0}$\ being $+1 $ or $-1$
according to $\alpha$\ is spacelike or timelike respectively). Let $\left \{
V_{1},V_{2},...,V_{n}\right \}  $ be the moving Frenet frame along the unit
speed curve $\alpha$, where $V_{i}\left(  i=1,2,...,n\right)  $ denote
$i^{th}$ Frenet vector fields and $k_{i}\left(  i=1,2,...,n-1\right)  $
denotes the $i^{th}$ curvature function of the curve. Then the Frenet formulas
are given as%
\begin{align*}
V_{1}^{\prime}  & =k_{1}V_{2},\\
V_{i}^{\prime}  & =-\varepsilon_{i-2}\varepsilon_{i-1}k_{i-1}V_{i-1}%
+k_{i}V_{i+1},\text{ \ }1<i<n,\\
V_{n}^{\prime}  & =-\varepsilon_{n-2}\varepsilon_{n-1}k_{n-1}V_{n-1}%
+k_{i}V_{i+1},
\end{align*}
where $\left \langle V_{i},V_{i}\right \rangle =\varepsilon_{i-1}=\mp1$
\cite{ilarslan}$.$

\section{Inextinsible Flows of Curve in $E_{1}^{n}$}

\bigskip Unless otherwise stated we assume that.%
\[
\alpha:\left[  0,l\right]  \times \left[  0,w\right)  \mathbb{\longrightarrow
}E_{1}^{n\text{ }}%
\]
is a one parameter family of smooth non-null curves in $E_{1}^{n\text{ }}$,
where $l$\ is the arclength of the initial curve. Suppose that $u$\ is the
curve parametrization variable , $0\leq u\leq l$. If the speed non-null curve
$\alpha$\ is given by $v=\left \Vert \frac{d\alpha}{du}\right \Vert , $ then the
arclength of $\alpha$\ is given as a function of $u$ by%
\[
s(u)=%
{\displaystyle \int \limits_{0}^{u}}
\left \Vert \frac{\partial \alpha}{\partial u}\right \Vert du=%
{\displaystyle \int \limits_{0}^{u}}
vdu.
\]
The operator $\frac{\partial}{\partial s}$\ is given by
\begin{equation}
\frac{\partial}{\partial s}=\frac{1}{v}\frac{\partial}{\partial u}.\label{3.1}%
\end{equation}
In this case; the arclength is as follows $ds=vdu$.

\begin{definition}
Let $\alpha$ be a differentiable non-null curve and $\left \{  V_{1}%
,V_{2},...,V_{n}\right \}  $\ be the Frenet frame of $\alpha$\ in Euclidean
n-space. Any flow of the non-null curve can be expressed as follows%
\[
\frac{\partial \alpha}{\partial t}=%
{\displaystyle \sum \limits_{i=1}^{n}}
f_{i}V_{i}.
\]
Here, $f_{i}$ is the $i^{th}$ scalar speed of the non-null\ curve $\alpha.$
\end{definition}

Let the arclength variation be%
\[
s(u,t)=%
{\displaystyle \int \limits_{0}^{u}}
vdu.
\]
In $E_{1}^{n\text{ }},$the requirement that the non-null curve not be subject
to any elongation or compression can be expressed by the condition
\begin{equation}
\frac{\partial}{\partial t}s(u,t)=%
{\displaystyle \int \limits_{0}^{u}}
\frac{\partial v}{\partial t}du=0,\text{ \ }u\in \left[  0,l\right]
.\label{3.2}%
\end{equation}
where $u\in \left[  0,l\right]  .$

\begin{definition}
Let $\alpha$\ be a non-null curve in $E_{1}^{n\text{ }}.$ A non-null curve
evolution $\alpha(u,t)$\ and its flow $\frac{\partial \alpha}{\partial t}$ are
said to be inextensible if%
\[
\frac{\partial}{\partial t}\left \Vert \frac{\partial \alpha}{\partial
u}\right \Vert =0.
\]
Before deriving the necessary and sufficient condition for inelastic
non-null\ curve flow, we need the following lemma.

\begin{lemma}
Let $\left \{  V_{1},V_{2},...,V_{n}\right \}  $\ be the Frenet frame of
non-null curve $\alpha \ $and $\frac{\partial \alpha}{\partial t}=%
{\displaystyle \sum \limits_{i=1}^{n}}
f_{i}V_{i}$ be a smooth flow of $\alpha$ in $E_{1}^{n\text{ }}.$ Then we have
the following equality:%
\begin{equation}
\frac{\partial v}{\partial t}=\varepsilon_{0}\frac{\partial f_{1}}{\partial
u}-\varepsilon_{1}f_{2}vk_{1}.\label{3.3}%
\end{equation}

\begin{proof}
As $\frac{\partial}{\partial u}$\ and $\frac{\partial}{\partial t}$\ commute
and $v^{2}=\left \langle \frac{\partial \alpha}{\partial u},\frac{\partial
\alpha}{\partial u}\right \rangle ,$\ we have%
\begin{align*}
2v\frac{\partial v}{\partial t}  & =\frac{\partial}{\partial t}\left \langle
\frac{\partial \alpha}{\partial u},\frac{\partial \alpha}{\partial
u}\right \rangle \\
& =2\left \langle \frac{\partial \alpha}{\partial u},\frac{\partial}{\partial
u}\left(
{\displaystyle \sum \limits_{i=1}^{n}}
f_{i}V_{i}\right)  \right \rangle \\
& =2\left \langle vV_{1},%
{\displaystyle \sum \limits_{i=1}^{n}}
\frac{\partial f_{i}}{\partial u}V_{i}+%
{\displaystyle \sum \limits_{i=1}^{n}}
f_{i}\frac{\partial V_{i}}{\partial u}\right \rangle \\
& =2\left \langle vV_{1},\frac{\partial f_{1}}{\partial u}V_{1}+f_{1}%
\frac{\partial V_{1}}{\partial u}+...+\frac{\partial f_{n}}{\partial u}%
V_{n}+f_{n}\frac{\partial V_{n}}{\partial u}\right \rangle \\
& =2\left \langle vV_{1},\frac{\partial f_{1}}{\partial u}V_{1}+f_{1}%
vk_{1}V_{2}+...+\frac{\partial f_{n}}{\partial u}V_{n}-f_{n}\varepsilon
_{n-2}\varepsilon_{n-1}vk_{n-1}V_{n-1}\right \rangle \\
& =2\left(  \varepsilon_{0}\frac{\partial f_{1}}{\partial u}-\varepsilon
_{1}f_{2}vk_{1}\right)  .
\end{align*}
This clearly forces%
\[
\frac{\partial v}{\partial t}=\varepsilon_{0}\frac{\partial f_{1}}{\partial
u}-\varepsilon_{1}f_{2}vk_{1}.
\]

\end{proof}
\end{lemma}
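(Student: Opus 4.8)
The plan is to obtain the evolution of the speed by differentiating the squared speed $v^{2}=\langle \frac{\partial \alpha}{\partial u},\frac{\partial \alpha}{\partial u}\rangle$ with respect to $t$ and then solving for $\frac{\partial v}{\partial t}$. First I would differentiate, producing $2v\frac{\partial v}{\partial t}=2\langle \frac{\partial \alpha}{\partial u},\frac{\partial}{\partial t}\frac{\partial \alpha}{\partial u}\rangle$, and then invoke the commutativity of the mixed partials $\frac{\partial}{\partial t}\frac{\partial}{\partial u}=\frac{\partial}{\partial u}\frac{\partial}{\partial t}$ to rewrite the right-hand factor as $\frac{\partial}{\partial u}\frac{\partial \alpha}{\partial t}=\frac{\partial}{\partial u}\sum_{i=1}^{n}f_{i}V_{i}$. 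Substituting also $\frac{\partial \alpha}{\partial u}=vV_{1}$, the whole expression collapses to a single inner product of $vV_{1}$ against $\frac{\partial}{\partial u}\sum_{i} f_{i} V_{i}$.

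The second step is to expand that $u$-derivative by the product rule into $\sum_{i}\left(\frac{\partial f_{i}}{\partial u}V_{i}+f_{i}\frac{\partial V_{i}}{\partial u}\right)$ and to convert each $\frac{\partial V_{i}}{\partial u}$ into an arclength derivative via $\frac{\partial}{\partial u}=v\frac{\partial}{\partial s}$, so that the Frenet formulas of Section~2 apply directly. The decisive simplification is the orthogonality of the frame: pairing with $V_{1}$ annihilates every term except those carrying a $V_{1}$-component. I expect exactly two survivors — the coefficient term $\frac{\partial f_{1}}{\partial u}V_{1}$, and the $V_{1}$-component produced by $f_{2}\frac{\partial V_{2}}{\partial u}$ through the Frenet relation $V_{2}^{\prime}=-\varepsilon_{0}\varepsilon_{1}k_{1}V_{1}+k_{2}V_{3}$. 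In particular $f_{1}\frac{\partial V_{1}}{\partial u}=f_{1}vk_{1}V_{2}$ contributes nothing, and all higher derivatives $\frac{\partial V_{i}}{\partial u}$ with $i\geq 3$ lie in $\mathrm{span}\{V_{i-1},V_{i+1}\}$ and are likewise orthogonal to $V_{1}$.

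Collecting the two surviving contributions, using $\langle V_{1},V_{1}\rangle=\varepsilon_{0}$ together with $\varepsilon_{0}^{2}=1$, and dividing through by $2v$ should yield $\frac{\partial v}{\partial t}=\varepsilon_{0}\frac{\partial f_{1}}{\partial u}-\varepsilon_{1}f_{2}vk_{1}$, as claimed. The main obstacle I anticipate is essentially bookkeeping: tracking the causal signs $\varepsilon_{i-1}$ correctly and being certain that no stray $V_{1}$-component sneaks in from the Frenet expansions — in particular verifying that the boundary relation $V_{n}^{\prime}$ never feeds back into the $V_{1}$ direction. Once the surviving terms are correctly isolated, the remainder is a short algebraic simplification.
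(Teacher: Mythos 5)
Your proposal is correct and takes essentially the same approach as the paper: differentiating $v^{2}=\left\langle \frac{\partial \alpha}{\partial u},\frac{\partial \alpha}{\partial u}\right\rangle$ in $t$, commuting the mixed partials, substituting $\frac{\partial \alpha}{\partial u}=vV_{1}$, expanding $\frac{\partial}{\partial u}\sum_{i}f_{i}V_{i}$ via the Frenet formulas, and letting orthogonality kill everything except the $\frac{\partial f_{1}}{\partial u}$ term and the $V_{1}$-component of $f_{2}\frac{\partial V_{2}}{\partial u}$. Your sign bookkeeping (in particular $\varepsilon_{0}^{2}=1$ turning $-\varepsilon_{0}^{2}\varepsilon_{1}f_{2}vk_{1}$ into $-\varepsilon_{1}f_{2}vk_{1}$) matches the paper's computation exactly.
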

\end{definition}

\begin{theorem}
\label{teo3.1}\bigskip Let $\left \{  V_{1},V_{2},...,V_{n}\right \}  $ be the
moving Frenet frame of the non-null curve $\alpha$ and $\frac{\partial \alpha
}{\partial t}=%
{\displaystyle \sum \limits_{i=1}^{n}}
f_{i}V_{i}$\ be a differentiable flow of $\alpha$\ in $E_{1}^{n\text{ }}$. In
this case, the flow is inextensible if and only if
\begin{equation}
\frac{\partial f_{1}}{\partial s}=\varepsilon_{0}\varepsilon_{1}f_{2}%
k_{1}.\label{3.4}%
\end{equation}

\end{theorem}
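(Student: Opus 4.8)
The plan is to start from the inextensibility condition in Definition given just above the theorem, namely $\frac{\partial}{\partial t}\left\Vert \frac{\partial\alpha}{\partial u}\right\Vert=0$, and unpack it. Since $v=\left\Vert \frac{\partial\alpha}{\partial u}\right\Vert$, this condition is literally $\frac{\partial v}{\partial t}=0$. So the whole theorem reduces to translating the identity of the preceding Lemma into the claimed equivalence.

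The key step is to invoke Lemma equation \eqref{3.3}, which gives
\[
\frac{\partial v}{\partial t}=\varepsilon_{0}\frac{\partial f_{1}}{\partial u}-\varepsilon_{1}f_{2}vk_{1}.
\]
I would argue that the flow is inextensible precisely when this right-hand side vanishes for all $u$, i.e.
\[
\varepsilon_{0}\frac{\partial f_{1}}{\partial u}-\varepsilon_{1}f_{2}vk_{1}=0.
\]

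Next I would convert the $u$-derivative into an $s$-derivative using the operator relation \eqref{3.1}, $\frac{\partial}{\partial s}=\frac{1}{v}\frac{\partial}{\partial u}$, so that $\frac{\partial f_{1}}{\partial u}=v\frac{\partial f_{1}}{\partial s}$. Substituting yields $\varepsilon_{0}v\frac{\partial f_{1}}{\partial s}=\varepsilon_{1}f_{2}vk_{1}$. Dividing through by the nonzero speed $v$ gives $\varepsilon_{0}\frac{\partial f_{1}}{\partial s}=\varepsilon_{1}f_{2}k_{1}$, and multiplying both sides by $\varepsilon_{0}$ (using $\varepsilon_{0}^{2}=1$) produces exactly \eqref{3.4}. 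Since every step is reversible, this establishes the ``if and only if.''

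I do not expect a genuine obstacle here: the Lemma does all the analytic work, and the theorem is essentially a restatement after a change of differentiation variable. The only points requiring care are confirming that $\frac{\partial}{\partial t}\left\Vert \frac{\partial\alpha}{\partial u}\right\Vert=0$ is equivalent to $\frac{\partial v}{\partial t}=0$ (immediate from the definition of $v$), and handling the sign bookkeeping of the $\varepsilon_{i}=\mp1$ factors correctly, using $\varepsilon_{0}^{2}=1$ to move $\varepsilon_{0}$ from one side to the other. I would also note explicitly that $v\neq0$ (the curve is non-null and regular) to justify the division by $v$.
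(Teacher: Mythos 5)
Your proposal is correct and follows essentially the same route as the paper: both arguments rest entirely on the identity \eqref{3.3} together with the operator relation \eqref{3.1}, followed by the same sign and division-by-$v$ bookkeeping. The only difference is cosmetic---you start from the pointwise definition $\frac{\partial v}{\partial t}=0$, while the paper starts from the integrated arclength condition \eqref{3.2} and then argues that the integrand must vanish for all $u$; your version has the minor advantage of making every step visibly an equivalence, whereas the paper proves one direction and dismisses the converse with ``following similar way as above.''
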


\begin{proof}
Let us assume that the non-null curve flow is inextensible. From equations
(\ref{3.2}) and (\ref{3.3}) it follows that%
\[
\frac{\partial}{\partial t}s(u,t)=%
{\displaystyle \int \limits_{0}^{u}}
\frac{\partial v}{\partial t}du=%
{\displaystyle \int \limits_{0}^{u}}
\left(  \varepsilon_{0}\frac{\partial f_{1}}{\partial u}-\varepsilon_{1}%
f_{2}vk_{1}\right)  du=0,\text{ \ }u\in \left[  0,l\right]  .
\]
This clearly forces%
\[
\varepsilon_{0}\frac{\partial f_{1}}{\partial u}-\varepsilon_{1}f_{2}vk_{1}=0.
\]
Combining the last equation with (\ref{3.1}) yields%
\[
\frac{\partial f_{1}}{\partial s}=\varepsilon_{0}\varepsilon_{1}f_{2}k_{1}.
\]
On the contrary, following similar way as above, the proof can be completed.

Now, suppose that the non-null curve $\alpha$ is a arclength parametrized
curve. That is, $v=1$\ and the local coordinate $u$\ corresponds to the curve
arclength $s$.

\begin{lemma}
Let $\left \{  V_{1},V_{2},...,V_{n}\right \}  $\ be the moving Frenet frame of
the non-null curve $\alpha$. The differentions of $\left \{  V_{1}%
,V_{2},...,V_{n}\right \}  $\ with respect to $t$\ is%
\begin{align*}
\frac{\partial V_{1}}{\partial t}  & =\left[
{\displaystyle \sum \limits_{i=2}^{n-1}}
\left(  f_{i-1}k_{i-1}+\frac{\partial f_{i}}{\partial s}-\varepsilon
_{i-1}\varepsilon_{i}f_{i+1}k_{i}\right)  V_{i}\right]  +\left(
f_{n-1}k_{n-1}+\frac{\partial f_{n}}{\partial s}\right)  V_{n},\\
\frac{\partial V_{j}}{\partial t}  & =-\varepsilon_{0}\left(  \varepsilon
_{j-1}f_{j-1}k_{j-1}+\varepsilon_{j-1}\frac{\partial f_{j}}{\partial
s}-\varepsilon_{j}f_{j+1}k_{j}\right)  V_{1}+\left[
{\displaystyle \sum \limits_{\substack{k=2 \\k\neq j}}^{n}}
\Psi_{kj}V_{k}\right]  ,\text{ \ }1<j<n,\\
\frac{\partial V_{n}}{\partial t}  & =-\varepsilon_{0}\varepsilon_{n-1}\left(
f_{n-1}k_{n-1}+\frac{\partial f_{n}}{\partial s}\right)  V_{1}+\left[
{\displaystyle \sum \limits_{k=2}^{n-1}}
\Psi_{kn}V_{k}\right]  ,
\end{align*}
where $\Psi_{kj}=\left \langle \frac{\partial V_{j}}{\partial t},V_{k}%
\right \rangle ,$ $k$ $\neq j,$ $1\leq j,k\leq n$\ and $\varepsilon
_{i-1}=\left \langle V_{i},V_{i}\right \rangle =\pm1,$ $1\leq i\leq n$.

\begin{proof}
As $\frac{\partial}{\partial t}$\ and $\frac{\partial}{\partial s}$\ commute,
we have
\begin{align*}
\frac{\partial V_{1}}{\partial t}  & =\frac{\partial}{\partial t}\left(
\frac{\partial \alpha}{\partial s}\right)  =\frac{\partial}{\partial s}\left(
\frac{\partial \alpha}{\partial t}\right)  =\frac{\partial}{\partial s}\left(
{\displaystyle \sum \limits_{i=1}^{n}}
f_{i}V_{i}\right)  =%
{\displaystyle \sum \limits_{i=1}^{n}}
\frac{\partial f_{i}}{\partial s}V_{i}+%
{\displaystyle \sum \limits_{i=1}^{n}}
f_{i}\frac{\partial V_{i}}{\partial s}\\
& =\frac{\partial f_{1}}{\partial s}V_{1}+f_{1}\frac{\partial V_{1}}{\partial
s}+\frac{\partial f_{2}}{\partial s}V_{2}+f_{2}\frac{\partial V_{2}}{\partial
s}+...+\frac{\partial f_{n}}{\partial s}V_{n}+f_{n}\frac{\partial V_{n}%
}{\partial s}\\
& =\frac{\partial f_{1}}{\partial s}V_{1}+f_{1}k_{1}V_{2}+\frac{\partial
f_{2}}{\partial s}V_{2}+f_{2}\left(  -\varepsilon_{0}\varepsilon_{1}k_{1}%
V_{1}+k_{2}V_{3}\right)  +...+\frac{\partial f_{n}}{\partial s}V_{n}%
-f_{n}\varepsilon_{n-2}\varepsilon_{n-1}k_{n-1}V_{n-1}.
\end{align*}
Substituting the equation (\ref{3.4}) into the last equation yields%
\[
\frac{\partial V_{1}}{\partial t}=\left[
{\displaystyle \sum \limits_{i=2}^{n-1}}
\left(  f_{i-1}k_{i-1}+\frac{\partial f_{i}}{\partial s}-\varepsilon
_{i-1}\varepsilon_{i}f_{i+1}k_{i}\right)  V_{i}\right]  +\left(
f_{n-1}k_{n-1}+\frac{\partial f_{n}}{\partial s}\right)  V_{n}.
\]
Now, differentiating the Frenet frame with respect to $t$ for $1<j<n$\
\begin{align}
0  & =\frac{\partial}{\partial t}\left \langle V_{1},V_{j}\right \rangle
=\left \langle \frac{\partial V_{1}}{\partial t},V_{j}\right \rangle
+\left \langle V_{1},\frac{\partial V_{j}}{\partial t}\right \rangle \nonumber \\
& =\left(  \varepsilon_{j-1}f_{j-1}k_{j-1}+\varepsilon_{j-1}\frac{\partial
f_{j}}{\partial s}-\varepsilon_{j}f_{j+1}k_{j}\right)  +\left \langle
V_{1},\frac{\partial V_{j}}{\partial t}\right \rangle .\label{3.5}%
\end{align}
Thus, from the last equation we get
\[
\frac{\partial V_{j}}{\partial t}=-\varepsilon_{0}\left(  \varepsilon
_{j-1}f_{j-1}k_{j-1}+\varepsilon_{j-1}\frac{\partial f_{j}}{\partial
s}-\varepsilon_{j}f_{j+1}k_{j}\right)  V_{1}+\left[
{\displaystyle \sum \limits_{\substack{k=2 \\k\neq j}}^{n}}
\Psi_{kj}V_{k}\right]  .
\]
Since $\left \langle V_{1},V_{n}\right \rangle =0,$\ this follows by the similar
method as above%
\[
\frac{\partial V_{n}}{\partial t}=-\varepsilon_{0}\varepsilon_{n-1}\left(
f_{n-1}k_{n-1}+\frac{\partial f_{n}}{\partial s}\right)  V_{1}+\left[
{\displaystyle \sum \limits_{k=2}^{n-1}}
\Psi_{kn}V_{k}\right]  .
\]

\begin{theorem}
Let the non-null curve flow $\frac{\partial \alpha}{\partial t}=%
{\displaystyle \sum \limits_{i=1}^{n}}
f_{i}V_{i}$\ be inextensible in $E_{1}^{n\text{ }}$. Then, there exist the
following system of partial differential equations.%
\begin{align*}
\frac{\partial k_{1}}{\partial t}  & =\varepsilon_{0}\varepsilon_{1}f_{2}%
k_{1}^{2}+f_{1}\frac{\partial k_{1}}{\partial s}+\frac{\partial^{2}f_{2}%
}{\partial s^{2}}-2\varepsilon_{1}\varepsilon_{2}\frac{\partial f_{3}%
}{\partial s}k_{2}-\varepsilon_{1}\varepsilon_{2}f_{3}\frac{\partial k_{2}%
}{\partial s}-\varepsilon_{1}\varepsilon_{2}f_{2}k_{2}^{2}-\varepsilon
_{1}\varepsilon_{3}f_{4}k_{2}k_{3},\\
\frac{\partial k_{i-1}}{\partial t}  & =-\varepsilon_{i-2}\varepsilon
_{i-1}\frac{\partial \Psi_{(i-1)i}}{\partial s}-\varepsilon_{i-2}%
\varepsilon_{i-1}\Psi_{(i-2)i}k_{i-2},\\
\frac{\partial k_{i}}{\partial t}  & =\frac{\partial \Psi_{(i+1)i}}{\partial
s}-\varepsilon_{i}\varepsilon_{i+1}\Psi_{(i+2)i}k_{i+1},\\
\frac{\partial k_{n-1}}{\partial t}  & =-\varepsilon_{n-2}\varepsilon
_{n-1}\frac{\partial \Psi_{(n-1)n}}{\partial s}-\varepsilon_{n-2}%
\varepsilon_{n-1}\Psi_{(n-2)n}k_{n-2}.
\end{align*}

\end{theorem}
\end{proof}
\end{lemma}
\end{proof}

\begin{proof}
\bigskip Noting that $\frac{\partial}{\partial s}\frac{\partial V_{1}%
}{\partial t}=\frac{\partial}{\partial t}\frac{\partial V_{1}}{\partial s}$
thus we have%
\begin{align}
\frac{\partial}{\partial s}\frac{\partial V_{1}}{\partial t}  & =\frac
{\partial}{\partial s}\left[
{\displaystyle \sum \limits_{i=2}^{n-1}}
\left(  f_{i-1}k_{i-1}+\frac{\partial f_{i}}{\partial s}-\varepsilon
_{i-1}\varepsilon_{i}f_{i+1}k_{i}\right)  V_{i}+\left(  f_{n-1}k_{n-1}%
+\frac{\partial f_{n}}{\partial s}\right)  V_{n}\right] \nonumber \\
& =%
{\displaystyle \sum \limits_{i=2}^{n-1}}
\left[  \left(  \frac{\partial f_{i-1}}{\partial s}k_{i-1}+f_{i-1}%
\frac{\partial k_{i-1}}{\partial s}+\frac{\partial^{2}f_{i}}{\partial s^{2}%
}-\varepsilon_{i-1}\varepsilon_{i}\frac{\partial f_{i+1}}{\partial s}%
k_{i}-\varepsilon_{i-1}\varepsilon_{i}f_{i+1}\frac{\partial k_{i}}{\partial
s}\right)  V_{i}\right] \nonumber \\
& +%
{\displaystyle \sum \limits_{i=2}^{n-1}}
\left[  \left(  f_{i-1}k_{i-1}+\frac{\partial f_{i}}{\partial s}%
-\varepsilon_{i-1}\varepsilon_{i}f_{i+1}k_{i}\right)  \frac{\partial V_{i}%
}{\partial s}\right] \label{3.8}\\
& +\left(  \frac{\partial f_{n-1}}{\partial s}k_{n-1}+f_{n-1}\frac{\partial
k_{n-1}}{\partial s}+\frac{\partial^{2}f_{n}}{\partial s^{2}}\right)
V_{n}+\left(  f_{n-1}k_{n-1}+\frac{\partial f_{n}}{\partial s}\right)
\frac{\partial V_{n}}{\partial s}\nonumber
\end{align}
while
\begin{equation}
\frac{\partial}{\partial t}\frac{\partial V_{1}}{\partial s}=\frac{\partial
}{\partial t}\left(  k_{1}V_{2}\right)  =\frac{\partial k_{1}}{\partial
t}V_{2}+k_{1}\frac{\partial V_{2}}{\partial t}.\label{3.9}%
\end{equation}
Therefore, from the equation (\ref{3.8}) and (\ref{3.9}) it is seen that%
\[
\frac{\partial k_{1}}{\partial t}=\varepsilon_{0}\varepsilon_{1}f_{2}k_{1}%
^{2}+f_{1}\frac{\partial k_{1}}{\partial s}+\frac{\partial^{2}f_{2}}{\partial
s^{2}}-2\varepsilon_{1}\varepsilon_{2}\frac{\partial f_{3}}{\partial s}%
k_{2}-\varepsilon_{1}\varepsilon_{2}f_{3}\frac{\partial k_{2}}{\partial
s}-\varepsilon_{1}\varepsilon_{2}f_{2}k_{2}^{2}-\varepsilon_{1}\varepsilon
_{3}f_{4}k_{2}k_{3}.
\]
Since $\frac{\partial}{\partial s}\frac{\partial V_{i}}{\partial t}%
=\frac{\partial}{\partial t}\frac{\partial V_{i}}{\partial s}$, we obtain%
\begin{align*}
\frac{\partial}{\partial s}\frac{\partial V_{i}}{\partial t}  & =\frac
{\partial}{\partial s}\left[  -\varepsilon_{0}\left(  \varepsilon_{i-1}%
f_{i-1}k_{i-1}+\varepsilon_{i-1}\frac{\partial f_{i}}{\partial s}%
-\varepsilon_{i}f_{i+1}k_{i}\right)  V_{1}+\left(
{\displaystyle \sum \limits_{\substack{k=2 \\k\neq i}}^{n}}
\Psi_{ki}V_{k}\right)  \right] \\
& =\varepsilon_{0}\left(  -\varepsilon_{i-1}\frac{\partial f_{i-1}}{\partial
s}k_{i-1}-\varepsilon_{i-1}f_{i-1}\frac{\partial k_{i-1}}{\partial
s}-\varepsilon_{i-1}\frac{\partial^{2}f_{i}}{\partial s^{2}}+\varepsilon
_{i}\frac{\partial f_{i+1}}{\partial s}k_{i}+\varepsilon_{i}f_{i+1}%
\frac{\partial k_{i}}{\partial s}\right)  V_{1}\\
& +\left(  -\varepsilon_{0}\varepsilon_{i-1}f_{i-1}k_{i-1}-\varepsilon
_{0}\varepsilon_{i-1}\frac{\partial f_{i}}{\partial s}+\varepsilon
_{0}\varepsilon_{i}f_{i+1}k_{i}\right)  \frac{\partial V_{1}}{\partial s}+%
{\displaystyle \sum \limits_{\substack{k=2 \\k\neq i}}^{n}}
\left(  \frac{\partial \Psi_{ki}}{\partial s}V_{k}+\Psi_{ki}\frac{\partial
V_{k}}{\partial s}\right)
\end{align*}
while%
\begin{align*}
\frac{\partial}{\partial t}\frac{\partial V_{i}}{\partial s}  & =\frac
{\partial}{\partial t}\left(  -\varepsilon_{i-2}\varepsilon_{i-1}%
k_{i-1}V_{i-1}+k_{i}V_{i+1}\right) \\
& =-\varepsilon_{i-2}\varepsilon_{i-1}\frac{\partial k_{i-1}}{\partial
t}V_{i-1}-\varepsilon_{i-2}\varepsilon_{i-1}k_{i-1}\frac{\partial V_{i-1}%
}{\partial t}+\frac{\partial k_{i}}{\partial t}V_{i+1}+k_{i}\frac{\partial
V_{i+1}}{\partial t}.
\end{align*}
Hence
\[
\frac{\partial k_{i-1}}{\partial t}=-\varepsilon_{i-2}\varepsilon_{i-1}%
\frac{\partial \Psi_{(i-1)i}}{\partial s}-\varepsilon_{i-2}\varepsilon
_{i-1}\Psi_{(i-2)i}k_{i-2}%
\]
and%
\[
\frac{\partial k_{i}}{\partial t}=\frac{\partial \Psi_{(i+1)i}}{\partial
s}-\varepsilon_{i}\varepsilon_{i+1}\Psi_{(i+2)i}k_{i+1}.
\]
By same way as above and considering $\frac{\partial}{\partial s}%
\frac{\partial V_{n}}{\partial t}=\frac{\partial}{\partial t}\frac{\partial
V_{n}}{\partial s}$\ we reach\bigskip%
\[
\frac{\partial k_{n-1}}{\partial t}=-\varepsilon_{n-2}\varepsilon_{n-1}%
\frac{\partial \Psi_{(n-1)n}}{\partial s}-\varepsilon_{n-2}\varepsilon
_{n-1}\Psi_{(n-2)n}k_{n-2}.
\]

\end{proof}

\end{document}